\documentclass{amsart}


\usepackage{amssymb}
\usepackage{graphicx}
\usepackage{MnSymbol}

\usepackage{amsmath,amscd}

\usepackage{amsthm}

\title[$\aleph_1$-free groups]{Automatic continuity of $\aleph_1$-free groups}

\bibliographystyle{te}

\theoremstyle{definition}\newtheorem{theorem}{Theorem}
\theoremstyle{definition}
\theoremstyle{definition}
\theoremstyle{definition}
\theoremstyle{definition}

\theoremstyle{definition}\newtheorem{bigtheorem}{Theorem}

\theoremstyle{definition}
\theoremstyle{definition}
\theoremstyle{definition}\newtheorem{definition}[theorem]{Definition}
\theoremstyle{definition}
\theoremstyle{definition}
\theoremstyle{definition}
\theoremstyle{definition}
\theoremstyle{definition}\newtheorem{lemma}[theorem]{Lemma}
\theoremstyle{definition}
\theoremstyle{definition}
\theoremstyle{definition}
\theoremstyle{definition}

\newcommand{\W}{\mathcal{W}}
\newcommand{\HEG}{\operatorname{HEG}}

\begin{document}

\author[Samuel M. Corson]{Samuel M. Corson}
\address{Ikerbasque- Basque Foundation for Science and Matematika Saila, UPV/EHU, Sarriena S/N, 48940, Leioa - Bizkaia, Spain}
\email{sammyc973@gmail.com}

\keywords{free group, almost free group, almost free abelian group}
\subjclass[2010]{Primary 20K20,  03E75 ; Secondary 22A05, 22B05  }
\thanks{The author is
supported by European Research Council grant PCG-336983.}

\begin{abstract}  We prove that groups for which every countable subgroup is free ($\aleph_1$-free groups) are n-slender, cm-slender, and lcH-slender.  In particular every homomorphism from a completely metrizable group to an $\aleph_1$-free group has an open kernel.  We also show that $\aleph_1$-free abelian groups are lcH-slender, which is especially interesting in light of the fact that some $\aleph_1$-free abelian groups are neither n- nor cm-slender.  The strongly $\aleph_1$-free abelian groups are shown to be n-, cm-, and lcH-slender.  We also give a characterization of cm- and lcH-slender abelian groups.
\end{abstract}

\maketitle

\begin{section}{Introduction}
Graham Higman defined a group to be \emph{$\kappa$-free}, with $\kappa$ a cardinal number, if each subgroup generated by fewer than $\kappa$ elements is a free group \cite{H1}.  By the Nielsen-Schreier Theorem each free group is $\kappa$-free for all cardinals $\kappa$.  The additive group of the rationals $\mathbb{Q}$ is an example of an $\aleph_0$-free group of cardinality $\aleph_0$ which is not free.  Higman produced an example of an $\aleph_1$-free group of cardinality $\aleph_1$ which is not free, and $\kappa$-free groups have been a focus of much study since then (\cite{H1}, \cite{S}, \cite{EkMe}, \cite{MaS}).  We prove that $\aleph_1$-free groups satisfy strong automatic continuity conditions.

Following \cite{CC} we define a group $H$ to be \emph{cm-slender} if every abstract homomorphism from a completely metrizable topological group to $H$ has open kernel.  Similarly $H$ is \emph{lcH-slender} provided each abstract homomorphism from a locally compact Hausdorff topological group to $H$ has open kernel.  If, for example, a group $H$ is cm-slender then the only completely metrizable topology that can be imposed on $H$ to make $H$ a topological group is the discrete topology.

A further notion of automatic continuity comes from fundamental groups: a group $H$ is \emph{n-slender} if every abstract group homomorphism from the fundamental group $\HEG$ of the Hawaiian earring to $H$ factors through a finite bouquet of circles \cite{Ed} (see Section \ref{Automatic}).  Free (abelian) groups were shown to be cm- and lcH-slender in \cite{D} and free groups were shown to be n-slender in \cite{H2}.  Many groups have since been shown to be n-, cm- and lcH-slender, and each of these notions of slenderness requires a group to be torsion-free and to not have $\mathbb{Q}$ as a subgroup (see \cite{CC} for more exposition).

We prove the following:

\begin{bigtheorem}\label{thebigone} $\aleph_1$-free groups are n-slender, cm-slender, and lcH-slender.
\end{bigtheorem}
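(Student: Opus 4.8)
The plan is to prove three slenderness properties for $\aleph_1$-free groups.The plan is to treat n-slenderness as the engine and then derive cm- and lcH-slenderness from it, in each case invoking the known slenderness of free groups (\cite{H2}, \cite{D}) after locating the relevant image inside a countable---hence, by $\aleph_1$-freeness, free---subgroup of $H$. Before anything else I record the cheap structural facts: every finitely generated subgroup of $H$ is free, so $H$ is torsion-free, and since $\mathbb{Q}$ is a non-free countable group it does not embed in $H$; thus none of the three notions is obstructed for the trivial reasons noted in \cite{CC}.

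For cm-slenderness, let $\phi\colon G\to H$ be a homomorphism from a completely metrizable group and suppose $\ker\phi$ is not open. Then there is a sequence $g_n\to 1$ with $\phi(g_n)\neq 1$, and after passing to a subsequence tending to $1$ rapidly, completeness together with continuity of the group operations guarantees that every infinite product formed from these $g_n$ converges in $G$. This produces a homomorphism $\psi\colon\HEG\to G$ with $\psi(\ell_n)=g_n$, obtained by sending each infinite reduced word to the corresponding (now convergent) infinite product. The composite $\phi\circ\psi\colon\HEG\to H$ then factors through a finite bouquet by n-slenderness, forcing $\phi(g_n)=\phi\psi(\ell_n)=1$ for all large $n$ and contradicting the choice of the $g_n$; hence $\ker\phi$ is open. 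For lcH-slenderness the same mechanism applies once a null sequence with non-trivial images is extracted: one passes to an open, compactly generated subgroup and uses compactness (compact groups being complete) to secure convergence of the needed products, again yielding $\psi\colon\HEG\to G$ and the same contradiction. I expect the lcH case to require a little structure theory (a Kakutani--Kodaira style reduction to a metrizable subquotient) but no new idea beyond the cm argument.

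This reduces everything to the core claim that $H$ is n-slender. Here the plan is to argue by contradiction from a homomorphism $\phi\colon\HEG\to H$ that does not factor through a finite bouquet, so that $\phi(\ell_n)\neq 1$ for infinitely many $n$. After discarding the trivial indices and reparametrising $\HEG$ by a homomorphism $\HEG\to\HEG$, $\ell_n\mapsto w_n$, whose defining elements $w_n$ have supports escaping to infinity, one may assume $\phi(\ell_n)\neq 1$ for \emph{every} $n$. The goal is then to manufacture, out of these non-trivial images, a single infinite word whose $\phi$-image is forced to be non-trivial yet must lie in every term $\gamma_k$ of the lower central series of the countable subgroup it generates. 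That subgroup is free, free groups are residually nilpotent (so $\bigcap_k\gamma_k=1$), and the contradiction follows; equivalently one exhibits a non-factoring homomorphism from $\HEG$ into a \emph{free} subgroup of $H$ and quotes n-slenderness of free groups from \cite{H2}.

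The main obstacle is exactly this concentration step: the image of $\HEG$ under an arbitrary homomorphism need not be countable, so one cannot simply replace $H$ by a countable free subgroup and invoke \cite{H2} verbatim. The difficulty is genuinely non-abelian. Nested infinite commutators---elements of the form $[\ell_1,[\ell_2,[\ell_3,\dots]]]$, legitimate in $\HEG$ precisely because their supports escape to infinity---are the tool that forces a witnessing element into a single countable subgroup and into $\bigcap_k\gamma_k$ at the same time. In the abelian world these commutators vanish, which is exactly why some $\aleph_1$-free abelian groups fail to be n-slender; the success of the argument for general $\aleph_1$-free groups rests on the residual nilpotence of free non-abelian groups and, more sharply, on the letter-by-letter combinatorics of Higman and Eda underlying \cite{H2}. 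Making the forcing of a non-trivial, infinitely-deep image precise while keeping the ambient generated subgroup countable is the step I expect to carry the real weight of the proof.
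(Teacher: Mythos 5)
Your proposal breaks down at both of its load-bearing steps. The reduction of cm- and lcH-slenderness to n-slenderness requires a homomorphism $\psi\colon\HEG\rightarrow G$ with $\psi$ sending the $k$-th generator to $g_{n_k}$, obtained ``by sending each infinite reduced word to the corresponding convergent infinite product.'' No amount of rapid convergence produces such a $\psi$. Elements of $\HEG$ are classes of words over arbitrary countable linear orders, and by uniqueness of reduced words an element such as $[a_0a_2a_4\cdots\,\cdots a_5a_3a_1]$ (reduced, of order type $\omega+\omega^*$), or one with densely ordered domain, has no representative of order type $\omega$; for these, ``the corresponding infinite product'' is not even defined as a limit of partial products. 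Even for words of type $\omega$ you would still have to prove that the value is independent of the representative (invariance under $\sim$) and that $\psi$ is multiplicative; neither is addressed. In the lcH case there is the further problem that a locally compact group need not be first countable, so a non-open kernel need not yield any null sequence to start from. Note also that if your reduction were valid it would show that \emph{every} n-slender group is cm-slender, with no hypothesis whatsoever on $H$ --- a far stronger claim than Theorem \ref{thebigone}. The paper never constructs a map out of $\HEG$ into $G$: in the cm case it builds, by an explicit inductive choice of neighborhoods, countably many elements $j_n\in G$ as limits of Cauchy sequences satisfying $j_n=g_n^2j_{n+1}^2$, and in the lcH case it gets the $j_n$ from nested compacta; these countably many elements and relations are all the argument needs.

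Second, the n-slenderness core --- which you correctly identify as carrying the real weight --- is left open, and the mechanism you sketch cannot close it. With nested infinite commutators the images satisfy $y_n=[h_n,y_{n+1}]$, and residual nilpotence of the countable free subgroup they generate forces $y_n\in\bigcap_k\gamma_k=\{1\}$ for all $n$; but this is no contradiction, since $[h_n,1]=1$ holds regardless of whether $h_n\neq 1$. A commutator recursion cannot recover $h_n$ from $y_n$ and $y_{n+1}$, so there is no way to force the ``infinitely deep'' image to be nontrivial. The recursion must be chosen so that triviality propagates back: the paper (following Higman) uses $U_n=W_n^2U_{n+1}^2$, so that $y_n=y_{n+1}=1$ forces $h_n=1$, while nontriviality of the $y_n$ produces a strictly increasing chain $H_n=\langle h_0,\ldots,h_n,y_{n+1}\rangle$ of finitely generated subgroups, no term of which is contained in a proper free factor of the next. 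The contradiction obtained is then not with n-slenderness of free groups --- your concentration problem is real: $\phi(\HEG)$ cannot be forced into any countable subgroup, and there is no retraction of $H$ onto one --- but with Higman's structure theory of $\aleph_1$-free groups, which goes beyond ``countable subgroups are free'': Lemma \ref{basic} (such chains stabilize, and basic subgroups exist) and Lemma \ref{nicelemma} (a separation lemma ensuring each new image $\phi(W_{n+1})$ escapes the basic subgroup $F_n$ built so far, which is what makes the chain strictly increasing and freely generated). These two ingredients, not residual nilpotence, are what make the proof go through; without them your outline does not yield a proof of n-slenderness, and hence none of the three slenderness properties.
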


As free groups are $\aleph_1$-free, this result is a strengthening of the classical facts that free groups are n-, cm- and lcH-slender.  The fact that $\aleph_1$-free groups are cm-slender immediately implies a result of Khelif \cite{Kh} that an uncountable $\aleph_1$-free group is not the homomorphic image of a \emph{Polish group} (a topological group which is separable and completely metrizable).  The cm-slenderness of $\aleph_1$-free groups can be obtained by modifying Khelif's proof.  We give a different proof which is both well suited to proving all three types of slenderness and seemingly simpler.  

Theorem \ref{thebigone} cannot be strengthened by substituting $\aleph_0$-freeness (that is, local freeness) for $\aleph_1$-freeness.  The group $\HEG$ is itself locally free and by considering the identity map we see that local freeness does not imply that a group is n-slender.  The group $\mathbb{Q}$ is locally free, and using a Hamel basis of $\mathbb{R}$ over $\mathbb{Q}$ it is possible to construct a homomorphism from $\mathbb{R}$ to $\mathbb{Q}$ which is not continuous.  Since $\mathbb{R}$ is both locally compact Hausdorff and completely metrizable, local freeness implies neither cm- nor lcH-slenderness.

Analogously define a group to be \emph{$\kappa$-free abelian} if each subgroup generated by fewer than $\kappa$ elements is free abelian.  A group which is $\aleph_1$-free abelian needn't be n- or cm-slender: the countably infinite product $\prod_{\omega}\mathbb{Z}$ is $\aleph_1$-free abelian \cite{B}.  This group has a completely metrizable topological group structure given by taking each $\mathbb{Z}$ to be discrete and giving the entire group the product topology, and so the identity map on $\prod_{\omega}\mathbb{Z}$ shows that an $\aleph_1$-free abelian group need not be cm-slender.  Also there is a canonical homomorphism from $\HEG$ to $\prod_{\omega}\mathbb{Z}$ which does not have open kernel, so n-slenderness needn't hold for an $\aleph_1$-free abelian group either.  However we have the following:

\begin{bigtheorem}\label{lcHslender} $\aleph_1$-free abelian groups are lcH-slender.
\end{bigtheorem}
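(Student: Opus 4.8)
The plan is to use the structure theory of locally compact groups to funnel the statement down to a purely algebraic fact about homomorphisms out of profinite abelian groups, which is where $\aleph_1$-freeness does the real work. Throughout let $\phi\colon G\to H$ be a homomorphism with $G$ locally compact Hausdorff and $H$ $\aleph_1$-free abelian; I want $\ker\phi$ open. Two elementary observations start things off: $H$ is torsion-free (its finitely generated subgroups are free abelian) and \emph{reduced}, i.e. has no nonzero divisible subgroup (a nonzero divisible subgroup of a torsion-free group contains a copy of $\mathbb{Q}$, a countable non-free group). The key algebraic principle I would isolate is a \emph{finite divisibility} property: if $J\le H$ is countable and $h\in J$ is nonzero, then $h\in\ell J$ for only finitely many primes $\ell$. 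Indeed $J$ is free abelian, and in a free abelian group the primes dividing a fixed nonzero element are exactly those dividing the gcd of its coordinates.

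The heart of the argument is the following \emph{Main Lemma}: every homomorphism $\psi\colon P\to H$ from a profinite abelian group $P$ is trivial. To prove it I would use the primary decomposition $P\cong\prod_{p}P_{p}$ into pro-$p$ Sylow components and show $\psi(P)=n\psi(P)$ for every $n\ge 1$; since $H$ is reduced this forces $\psi(P)=0$. Fix $n$ and let $S=\{p:p\mid n\}$, so $P=P_{S}\times P^{S}$ with $P_{S}=\prod_{p\in S}P_{p}$ and $P^{S}=\prod_{p\notin S}P_{p}$. First, $\psi(P_{S})=0$: for $m$ coprime to $n$, multiplication by $m$ is an automorphism of each $P_{p}$ with $p\in S$, so $P_{S}=mP_{S}$; were $\psi(P_{S})$ to contain a nonzero $h$, then $h$ would lie in $\ell\,\psi(P_{S})$ for every prime $\ell\nmid n$, and collecting witnesses produces a countable subgroup violating finite divisibility. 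Second, multiplication by $n$ is an automorphism of each $P_{p}$ with $p\notin S$, so $nP\supseteq P^{S}$. Combining, $\ker\psi+nP\supseteq P_{S}+P^{S}=P$, whence $\psi(P)=\psi(nP)=n\psi(P)$, as desired.

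Next I would promote this to compact groups: for any compact $K$ and $k\in K$ the closure $\overline{\langle k\rangle}$ is a compact \emph{abelian} (monothetic) group, whose identity component is connected compact abelian, hence divisible and killed by $\phi$, while the profinite quotient is killed by the Main Lemma; thus $\phi(k)=0$ for every $k$, i.e. $\phi(K)=0$. The topological reduction then runs as follows. The component $G^{0}$ is closed and normal and $G/G^{0}$ is totally disconnected, so by van Dantzig it has a compact open subgroup, whose preimage $V$ is an open, almost connected subgroup of $G$; it suffices to prove $\ker(\phi|_{V})$ is open in $V$. By the Gleason--Yamabe theorem $V$ has a compact normal subgroup $N$ with $V/N$ a Lie group; since $\phi(N)=0$ the map $\phi|_{V}$ descends to $\bar\phi\colon V/N\to H$. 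A Lie group has open identity component $L^{0}$, and $L^{0}$ is generated by its one-parameter subgroups, each a homomorphic image of the divisible group $\mathbb{R}$; as $H$ is reduced, $\bar\phi(L^{0})=0$. Hence $\ker\bar\phi$ contains the open subgroup $L^{0}$, its preimage in $V$ is open and contained in $\ker\phi$, and $\ker\phi$ is open.

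I expect the main obstacle to be the passage, inside the Main Lemma, from the dense sum $\bigoplus_{p}P_{p}$ (on which $\psi$ visibly vanishes) to the full product $\prod_{p}P_{p}$. This is precisely the phenomenon distinguishing lcH- from cm-slenderness: the Specker group $\prod_{\omega}\mathbb{Z}$ shows that a homomorphism out of a completely metrizable group need not vanish on such a product, and what rescues the locally compact setting is compactness, which forces the relevant domain to be profinite and thereby supplies the coprime-divisibility automorphisms used above. Getting the identity $\ker\psi+nP=P$ exactly right, and correctly invoking the structure theory (van Dantzig and Gleason--Yamabe) to reduce to the compact and connected-Lie cases, are the points needing the most care; by contrast the monothetic-subgroup trick keeps the compact case from ever requiring any nonabelian structure theory.
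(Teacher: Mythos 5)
Your proposal is correct, but it takes a genuinely different route from the paper's. The paper deduces Theorem \ref{lcHslender} from the characterization in Theorem \ref{characterizationoflcHslenderabelian}(2) (an abelian group is lcH-slender if and only if it is cotorsion-free): one checks that an $\aleph_1$-free abelian group has no torsion, no $\mathbb{Q}$, and no $J_p$ (since $J_p$ contains $\mathbb{Z}[\frac{1}{q}]$ for $q\neq p$), hence is cotorsion-free. The substance of that characterization is proved in two steps: Lemma \ref{getoff} shows that $\ker(\phi)$ is \emph{closed} (a limiting argument which would otherwise manufacture a nontrivial homomorphic image of $\hat{\mathbb{Z}}$, i.e.\ nontrivial cotorsion, inside $H$), and then the structure theorem for locally compact \emph{abelian} groups is applied to the Hausdorff quotient $G/\ker(\phi)$: an open subgroup $\mathbb{R}^n\times K$ must have $n=0$ (no $\mathbb{Q}$ in $H$) and $\phi(K)$ trivial (images of compact groups are cotorsion). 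You never prove the kernel is closed --- which is exactly the step that forces the paper into the abelian quotient --- and instead use the structure theory of \emph{general} locally compact groups (van Dantzig plus Gleason--Yamabe) to locate an open subgroup of $\ker(\phi)$ directly, reducing everything to compact subgroups (killed by your monothetic-closure trick plus the Main Lemma) and one-parameter subgroups (killed by reducedness). Your Main Lemma is, in effect, a by-hand proof of the single fact the paper imports from cotorsion theory, namely that compact groups have trivial image in $H$; your Sylow-decomposition argument, with the identity $\ker\psi+nP=P$ and the ``collecting witnesses'' appeal to finite divisibility in countable (hence free abelian) subgroups, is correct, and it uses $\aleph_1$-freeness directly rather than through cotorsion-freeness. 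The trade-off: the paper's route yields the sharper, optimal statement (every cotorsion-free abelian group is lcH-slender), with topological input confined to the classical structure theorem for locally compact abelian groups; your route proves only Theorem \ref{lcHslender} and leans on the substantially deeper Gleason--Yamabe theorem, but in exchange it is algebraically self-contained (no cotorsion theory, no facts about $J_p$ or $\hat{\mathbb{Z}}$ needed) and dispenses with the closed-kernel lemma altogether.
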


Theorem \ref{lcHslender} cannot be strengthened by replacing $\aleph_1$ with $\aleph_0$ since $\mathbb{Q}$ is not lcH-slender.   We prove Theorem \ref{lcHslender} from the following classification (see definitions in Section \ref{abeliancase}):

\begin{bigtheorem}\label{characterizationoflcHslenderabelian}  If $H$ is an abelian group then
\begin{enumerate}

\item $H$ is cm-slender if and only if $H$ is torsion-free, reduced and contains no subgroup which admits a non-discrete Polish topology

\item $H$ is lcH-slender if and only if $H$ is cotorsion-free

\end{enumerate}
\end{bigtheorem}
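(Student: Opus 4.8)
The plan is to prove each biconditional by establishing necessity through explicit counterexamples (in contrapositive form) and sufficiency through an automatic-continuity argument. For necessity in (1), suppose $H$ fails one of the three listed properties. If $H$ has an element of prime order $p$, then $\mathbb{Z}/p\mathbb{Z}\le H$, and I would take $G=(\mathbb{Z}/p\mathbb{Z})^{\omega}$ with the product (hence compact metrizable) topology; viewing $G$ as an infinite-dimensional $\mathbb{F}_p$-vector space, any $\mathbb{F}_p$-linear functional not depending on finitely many coordinates is a homomorphism onto $\mathbb{Z}/p\mathbb{Z}\le H$ with dense, hence non-open, kernel. If $H$ is torsion-free but not reduced then $\mathbb{Q}\le H$, and a $\mathbb{Q}$-linear surjection $\mathbb{R}\to\mathbb{Q}$ (a coordinate with respect to a Hamel basis) composed with $\mathbb{Q}\hookrightarrow H$ has dense kernel. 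Finally, if $H$ has a subgroup $K$ carrying a non-discrete Polish topology, the inclusion $K\hookrightarrow H$ is a homomorphism from a completely metrizable group whose (trivial) kernel is not open. The same three examples, with $(\mathbb{Z}/p\mathbb{Z})^{\omega}$ compact, $\mathbb{R}$ locally compact, and a copy of the $p$-adic integers $\mathbb{J}_p$ (which is compact and non-discrete) in place of $K$, prove necessity in (2) once one recalls that $H$ is cotorsion-free exactly when it is torsion-free, reduced, and contains no copy of $\mathbb{J}_p$.

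For sufficiency in (1), let $\phi\colon G\to H$ with $G$ completely metrizable and suppose $\ker\phi$ is not open; I want to contradict the three hypotheses on $H$. First I would reduce to the separable case: choosing a null sequence $(x_n)$ with $x_n\notin\ker\phi$ (possible since $\ker\phi$ is not open) and passing to the closed subgroup it generates, I may assume $G$ is Polish. The decisive dichotomy is whether $\ker\phi$ is closed. If $\ker\phi$ is closed but not open, then $G/\ker\phi$ is a non-discrete Polish group that is abstractly isomorphic to $\phi(G)\le H$; transporting the topology exhibits a subgroup of $H$ with a non-discrete Polish topology, contrary to hypothesis. The main obstacle is therefore to rule out a non-closed kernel. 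A non-closed kernel produces a null sequence $x_n\to e$ with $\phi(x_n)=h$ for a fixed $h\neq 0$; replacing $x_n$ by suitable powers $x_n^{k_n}$ that still tend to $e$ makes $\phi(x_n^{k_n})=h^{k_n}$ grow, and a length-function argument exploiting completeness in the spirit of Dudley \cite{D} should then force $h$ to be either torsion or infinitely divisible in a way incompatible with $H$ being torsion-free and reduced. Establishing that this dichotomy is exhaustive, so that torsion-freeness and reducedness alone force the kernel to be closed, is the technical heart of (1).

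For sufficiency in (2), let $\phi\colon G\to H$ with $G$ locally compact Hausdorff and $H$ cotorsion-free, and aim to show $\ker\phi$ is open; here I would lean on the structure theory of locally compact groups. The identity component $G^{0}$ is a connected locally compact group, hence divisible as an abstract group, so $\phi(G^{0})$ is a divisible subgroup of the reduced group $H$ and is therefore trivial; thus $\phi$ factors through the totally disconnected group $G/G^{0}$. By van Dantzig's theorem $G/G^{0}$ has a compact open subgroup $U$, and it suffices to prove that $\phi|_{U}$ has open kernel. This reduces the problem to showing that an abstract homomorphism from a profinite group to a cotorsion-free group has open, equivalently finite-index, kernel. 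I expect this to be the main obstacle of (2): if the kernel were not open the image would be infinite, and using the inverse-limit presentation of $U$ together with completeness one should extract an embedded copy of $\mathbb{J}_p$ for some prime $p$, equivalently show directly that $\mathrm{Hom}(\mathbb{J}_p,H)=0$ for every cotorsion-free $H$, contradicting cotorsion-freeness.

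Finally, I would record the consistency of the two characterizations: since $\mathbb{J}_p$ carries a non-discrete Polish topology, any $H$ satisfying the hypotheses of (1) is in particular cotorsion-free, so every cm-slender abelian group is lcH-slender, a coherence check that the two descriptions fit together correctly.
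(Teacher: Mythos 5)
Your necessity arguments in both parts are correct and essentially identical to the paper's, but both sufficiency directions contain genuine gaps, and the common missing ingredient is the paper's key lemma (Lemma \ref{getoff}): if $\phi\colon G \to H$ has completely metrizable or locally compact Hausdorff domain and \emph{cotorsion-free} abelian codomain, then $\ker(\phi)$ is closed. In (1), your closed-kernel case agrees with the paper, but the claim you call the technical heart --- that torsion-freeness and reducedness alone force the kernel to be closed --- is false. Counterexample: let $H = J_p$, which is torsion-free and reduced, and $G = \prod_{\omega}\mathbb{Z}$, which is Polish. The subgroup $\bigoplus_{\omega}\mathbb{Z}$ is pure in $G$, and $J_p$ is algebraically compact, hence pure-injective, so the homomorphism sending each standard generator $e_n$ to $1 \in J_p$ extends to a homomorphism $\phi\colon G \to J_p$. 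Then $e_n - e_m \in \ker(\phi)$ for all $n,m$, and $e_n - e_m \to e_n$ as $m \to \infty$ in the product topology, while $\phi(e_n) = 1 \neq 0$; so $\ker(\phi)$ is not closed. Moreover no nonzero element of $J_p$ is torsion or infinitely divisible, so your proposed dichotomy (``torsion or infinitely divisible'') is not exhaustive: a non-closed kernel does not produce such an element. What it actually produces --- and this is the content of Lemma \ref{getoff} --- is a nontrivial homomorphic image of $\hat{\mathbb{Z}}$ in $H$: given $h \neq 1_H$ in $\bigcap_U \phi(U)$, completeness (or compactness) lets one realize, for each integer sequence $(a_n)$, an infinite iterated product $j \in G$ with $(m+1)! \mid \phi(j) - \sum_{n=1}^m n!\,a_n h$ for all $m$; triviality of the first Ulm subgroup (which does follow from torsion-free plus reduced) makes $\sum n!\,a_n \mapsto \phi(j)$ a well-defined homomorphism $\hat{\mathbb{Z}} \to H$ whose image is cotorsion and contains $h$. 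Ruling this out requires that $H$ contain no copy of $J_p$, which is exactly where the third hypothesis enters (since $J_p$ is compact metrizable, hence Polish and non-discrete). So the Polish-subgroup hypothesis is needed in the non-closed case too, not only in the closed one.

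In (2) your route diverges from the paper's and has two defects. First, ``connected locally compact implies divisible as an abstract group'' is false: $\mathrm{SL}_2(\mathbb{R})$ is connected and locally compact, yet the matrix with diagonal entries $-2, -1/2$ has no square root in it. (The conclusion that $\phi(G^0)$ is trivial can be rescued, but not by this argument.) Second, and more seriously, you leave as the ``main obstacle'' the claim that an abstract homomorphism from a profinite group $U$ to a cotorsion-free group has open kernel, and your proposed extraction of $J_p$ is unjustified; note also that $U$ need not be abelian (your $G$ is an arbitrary locally compact Hausdorff group), in which case $\phi\upharpoonright U$ factors only through the \emph{abstract} abelianization of $U$, which is not a quotient of any evident compact abelian group, so one cannot conclude that the image is cotorsion. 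The paper sidesteps all of this by applying Lemma \ref{getoff} first: once $\ker(\phi)$ is closed, $G/\ker(\phi)$ is a locally compact Hausdorff group which is abelian because it embeds abstractly into $H$; the structure theorem then gives an open subgroup $\mathbb{R}^n \times K$ with $K$ compact abelian, $n = 0$ since $H$ contains no $\mathbb{Q}$, and $\phi(K)$ is cotorsion (compact abelian groups are algebraically compact), hence trivial --- so the quotient is discrete and the kernel is open. Once an abelian compact group is in hand, the step you flagged as the main obstacle is a one-line consequence of cotorsion-freeness; the real work, which your proposal never supplies, is the construction of homomorphic images of $\hat{\mathbb{Z}}$ from a non-closed kernel.
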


The n-slender abelian groups are already known to be precisely the slender groups \cite{Ed}, and Theorem \ref{characterizationoflcHslenderabelian} was already known for abelian groups of cardinality $<2^{\aleph_0}$ (see \cite[Theorem C]{CC}).  Thus among abelian groups we have

\begin{center}
 cm-slender $\Longrightarrow$ n-slender $\Longrightarrow$ lcH-slender
\end{center}

For n- and cm-slenderness we need to demand a bit more from an $\aleph_1$-free abelian group (see Definition \ref{stronglydef}):

\begin{bigtheorem}\label{strongly}  Strongly $\aleph_1$-free abelian groups are n-slender, cm-slender and lcH-slender.
\end{bigtheorem}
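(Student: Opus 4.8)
The plan is to obtain all three conclusions from cm-slenderness. A strongly $\aleph_1$-free abelian group is in particular $\aleph_1$-free abelian, so lcH-slenderness is already given by Theorem \ref{lcHslender}; and since, among abelian groups, cm-slender $\Rightarrow$ n-slender $\Rightarrow$ lcH-slender, it is enough to prove that a strongly $\aleph_1$-free abelian group $H$ is cm-slender (which re-proves the other two as well). By Theorem \ref{characterizationoflcHslenderabelian}(1) this reduces to verifying three things: that $H$ is torsion-free, that $H$ is reduced, and that $H$ has no subgroup admitting a non-discrete Polish topology. The first two are inherited from $\aleph_1$-free abelianness: any torsion element lies in a countable, hence free abelian, hence torsion-free subgroup, so $H$ is torsion-free; and a nonzero divisible subgroup of a torsion-free group contains a copy of $\mathbb{Q}$, which is countable but not free abelian, so $H$ is reduced.

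Thus everything rests on the third condition, and this is where the word \emph{strongly} must do all the work---recall that $\prod_{\omega}\mathbb{Z}$ is $\aleph_1$-free abelian yet carries a non-discrete Polish topology, so ordinary $\aleph_1$-freeness cannot suffice. Suppose for contradiction that $K\le H$ carries a non-discrete Polish topology. Since the identity is not isolated, I would first build a sequence $(f_{l})_{l\in\omega}$ of nonzero elements of $K$ tending to $0$ so rapidly in a complete compatible metric that $\Phi(\sigma)=\sum_{l}\sigma(l)\,l!\,f_{l}$ converges for every $\sigma\in 2^{\omega}$ and, moreover, that the resulting map $\Phi\colon 2^{\omega}\to K$ is continuous; the convergence of each tail series $\sum_{l\ge N}\sigma(l)\tfrac{l!}{N!}f_{l}$ is arranged at the same time. (This recursive construction, controlling the finitely many partial sums at each level of the binary tree, is routine but is the one genuinely analytic point.)

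Now I would invoke Definition \ref{stronglydef}: applied to the countable subgroup $D=\langle f_{l}:l\in\omega\rangle$ it produces a countable free subgroup $B$ with $D\subseteq B\subseteq H$ and $H/B$ $\aleph_1$-free. Splitting $\Phi(\sigma)$ at stage $N$ puts the head $\sum_{l<N}\sigma(l)l!f_{l}$ inside $B$ and writes the tail as $N!\,r_{\sigma,N}$ with $r_{\sigma,N}\in K$, so the image of $\Phi(\sigma)$ in $H/B$ is divisible by $N!$ for every $N$. Since every countable subgroup of $H/B$ is free abelian and a nonzero element of a free abelian group cannot be divisible by all $N!$, each $\Phi(\sigma)$ lies in $B$. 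Hence $\Phi(2^{\omega})\subseteq B$ is countable, the fibres of the continuous map $\Phi$ are closed and cover the Baire space $2^{\omega}$, and so some fibre contains a basic clopen set $[s]$. Flipping a coordinate $l_{0}>|s|$ keeps $\sigma$ in $[s]$ while changing $\Phi(\sigma)$ by $\pm l_{0}!\,f_{l_{0}}$; constancy on $[s]$ then forces $l_{0}!\,f_{l_{0}}=0$, and torsion-freeness gives $f_{l_{0}}=0$, contradicting the choice of the $f_{l}$.

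I expect the main obstacle to be this last step---specifically, engineering the sequence $(f_{l})$ so that the factorial coefficients simultaneously yield genuine convergence, continuity of $\Phi$, and infinite divisibility modulo $B$. The conceptual heart is the passage to $H/B$: strong $\aleph_1$-freeness is exactly the hypothesis that upgrades ``$H$ has free countable subgroups'' to ``$H$ has free countable subgroups with $\aleph_1$-free quotient,'' and it is the $\aleph_1$-freeness of the \emph{quotient} that annihilates the infinitely divisible element and, through Baire category, the putative Polish topology.
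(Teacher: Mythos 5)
Your proposal is correct, but its engine is genuinely different from the paper's. Both you and the paper funnel everything through cm-slenderness and then quote the chain cm-slender $\Rightarrow$ n-slender $\Rightarrow$ lcH-slender for abelian groups; the difference is how cm-slenderness is proved. The paper does it directly: given $\phi\colon G\to H$ with non-open kernel, it picks $g_n\to 1_G$ off the kernel, encloses $\{\phi(g_n)\}_{n\in\omega}$ in a countable $\aleph_1$-pure subgroup $M$ (Definition \ref{stronglydef}), runs the same nested Cauchy construction as in the proof of Theorem \ref{thebigone} to manufacture limits $j_m$, and then uses a retraction $\rho\colon\langle M\cup\{\phi(j_m)\}_{m\in\omega}\rangle\to M$ supplied by $\aleph_1$-purity to rerun the length-function descent and contradict Lemma \ref{basic}. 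You instead invoke the characterization in Theorem \ref{characterizationoflcHslenderabelian}(1), so the only new content you must supply is that no subgroup of $H$ admits a non-discrete Polish topology; your Cantor-space map $\Phi(\sigma)=\sum_l\sigma(l)\,l!\,f_l$, the divisibility of $\Phi(\sigma)$ modulo $B$ by every $N!$, the $\aleph_1$-freeness of $H/B$ forcing $\Phi(2^\omega)\subseteq B$, and the Baire-category/coordinate-flip finish appear nowhere in the paper. Your route is more modular: all metric-space analysis is quarantined inside Theorem \ref{characterizationoflcHslenderabelian} (already proved), strong freeness enters only through a purely algebraic divisibility argument, and the recursive choice of the $f_l$ (finitely many open conditions at each stage, with non-discreteness providing a nonzero element) is indeed routine as you claim. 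The paper's route is self-contained and reuses its Theorem \ref{thebigone} machinery verbatim, at the cost of repeating the length-function bookkeeping. One step you should make explicit: Definition \ref{stronglydef} produces a countable $\aleph_1$-pure $B\supseteq D$ in the sense of Definition \ref{purity}, not literally a $B$ with $H/B$ $\aleph_1$-free, which is what your divisibility argument uses. The two formulations are equivalent for countable subgroups of an $\aleph_1$-free group --- if $B$ is $\aleph_1$-pure and $C/B$ is a countable subgroup of $H/B$, then $C$ is countable, so $C=B\oplus N$ with $N\cong C/B$ free; conversely, freeness of countable quotients splits the relevant extensions --- but this translation is a lemma your proof needs and should state.
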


We have already seen that the modifier ``strongly'' may not be dropped while concluding n- and cm-slenderness.  In Section \ref{Automatic} we prove Theorem \ref{thebigone} and in Section \ref{abeliancase} we prove Theorems \ref{characterizationoflcHslenderabelian}, \ref{lcHslender} and \ref{strongly}.
\end{section}

\begin{section}{Automatic continuity in the non-abelian case}\label{Automatic}

We begin this section with a review of the Hawaiian earring group $\HEG$.  After this we give background lemmas and prove Theorem \ref{thebigone}.  Start with a countably infinite set $\{a_n^{\pm 1}\}_{n\in \omega}$ which has formal inverses.  We say a function $W: \overline{W} \rightarrow \{a_n^{\pm 1}\}_{n\in \omega}$ is a \emph{word} if the domain $\overline{W}$ is a totally ordered set and for each $m$ the preimage $W^{-1}(\{a_{n}^{\pm 1}\}_{n=0}^m)$ is finite.  We write $W \equiv U$ for words $W$ and $U$ provided there exists an order isomorphism $\iota: \overline{W} \rightarrow \overline{U}$ such that $W(i) = U(\iota(i))$.  Let $\W$ denote a selection from each $\equiv$ class.  For $m\in \omega$ let $p_m$ denote the map from $\W$ to the set of finite words given by the restriction $p_m(W) \equiv W\upharpoonright\{i\in \overline{W}\mid W(i)\in \{a_{n}^{\pm 1}\}_{n=0}^m\}$.  

For $W, U \in \W$ we write $W \sim U$ if for every $m\in \omega$ the words $p_m(W)$ and $p_m(U)$ are equal as elements in the free group $F(a_0, \ldots, a_m)$.  For $U \in \W$ we write $U^{-1}$ for the word whose domain is $\overline{U}$ under the reverse order satisfying $U^{-1}(i) = (U(i))^{-1}$.  We concatenate two words $W, U\in \W$ by letting $\overline{WU}$ be the disjoint union $\overline{W} \sqcup \overline{U}$ under the order which preserves that of both $\overline{W}$ and $\overline{U}$ and places elements in $\overline{W}$ below those of $\overline{U}$.  The map $WU$ is given by $WU(i) = \begin{cases}W(i)$ if $i\in \overline{W}\\ U(i)$ if $i\in \overline{U}\end{cases}$

The quotient set $\HEG = \W/\sim$ has a group structure given by $[W][U] = [WU]$ and $[U]^{-1} = [U^{-1}]$.  The free group $F(a_0, \ldots, a_m)$ embeds naturally into $\HEG$ by considering finite words in $\{a_n\}_{n=0}^m$ as words as defined above, and we let $\HEG_m$ denote this copy of the free group.  Each aforementioned map $p_m: \W \rightarrow \W$ induces a homomorphic retraction $p_m:\HEG \rightarrow \HEG_m$.  For each $m$ we similarly have a word map $p^m(W) \equiv W\upharpoonright\{i\in \overline{W}\mid W(i)\notin \{a_{n}^{\pm 1}\}_{n=0}^m\}$ which defines a retraction to the subgroup $\HEG^m$ consisting of those elements of $\HEG$ which have a representative $W$ for which $W(\overline{W}) \cap \{a_{n}^{\pm 1}\}_{n=0}^m = \emptyset$.  There is a natural decomposition $\HEG \simeq \HEG_m *
\HEG^m$ for each $m$ given by considering a word as a finite concatenation of words utilizing elements in $\{a_n^{\pm 1}\}_{n=0}^m$ and words which do not.  The following definition is found in \cite{Ed}:

\begin{definition}\label{nslender}  A group $H$ is \emph{n-slender} if for every homomorphism $\phi: \HEG \rightarrow H$ there exists $m\in \omega$ for which $\phi  = \phi \circ p_m$.  Equivalently $H$ is n-slender if for every homomorphism $\phi: \HEG \rightarrow H$ there exists $m\in \omega$ such that $\HEG^m \leq \ker(\phi)$.
\end{definition}

We will make use of the following (see \cite[Theorem 1]{H1}):

\begin{lemma}\label{basic}  If $H$ is an $\aleph_1$-free group then each nondecreasing sequence $\{K_n\}_{n\in \omega}$ of finitely generated subgroups of $H$ such that $K_n$ is not contained in a proper free factor of $K_{n+1}$ must eventually stabilize.  Moreover every finitely generated $H_0 \leq H$ is included in a finitely generated $H_0 \leq H_1$ such that $H_1$ is a free factor of each free subgroup of $H$ which contains it.
\end{lemma}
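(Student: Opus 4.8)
The plan is to prove the chain condition (the first assertion) using the structure theory of finitely generated subgroups of free groups, and then to deduce the second assertion by a termination argument.

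First I would reduce to a genuinely free ambient group. The union $K := \bigcup_{n\in\omega} K_n$ is countable, so $\aleph_1$-freeness gives that $K$ is free; in particular each $K_n$ is a finitely generated free group. For a finitely generated $A \le K$ let $\operatorname{cl}(A)$ denote the smallest free factor of $K$ containing $A$; this free factor closure exists, is finitely generated, and satisfies $\operatorname{rank}(\operatorname{cl}(A)) \le \operatorname{rank}(A)$. Using the standard fact that the intersection of a free factor of $K$ with an arbitrary subgroup $B$ is a free factor of $B$, I would observe that $\operatorname{cl}(K_n) \cap K_{n+1}$ is a free factor of $K_{n+1}$ containing $K_n$; the hypothesis that $K_n$ lies in no proper free factor of $K_{n+1}$ then forces $\operatorname{cl}(K_n)\cap K_{n+1} = K_{n+1}$, i.e. $K_{n+1} \le \operatorname{cl}(K_n)$. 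Since trivially $\operatorname{cl}(K_n) \le \operatorname{cl}(K_{n+1})$, all the closures coincide with the single finitely generated free factor $L := \operatorname{cl}(K_0)$, and every $K_n \le L$.

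The heart of the argument is the stabilization inside the fixed finite-rank free group $L$. Fixing a basis of $L$ turns $L$ into (the fundamental group of) a rose $R$, and each $K_n$ acquires a Stallings core graph $\Gamma_n$ together with a label-preserving immersion $\Gamma_n \to R$. The hypothesis that $K_n$ is contained in no proper free factor of $K_{n+1}$ translates exactly into surjectivity (on edges, hence on vertices) of the induced immersion $\Gamma_n \to \Gamma_{n+1}$. Consequently $|E(\Gamma_n)|$ is a nonincreasing sequence of positive integers and must be eventually constant, and likewise $|V(\Gamma_n)|$. Once both are constant, each surjection $\Gamma_n \to \Gamma_{n+1}$ is a bijection on edges and vertices, so it is an isomorphism of $R$-labeled graphs; this yields $K_n = K_{n+1}$, and the chain stabilizes. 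I expect this translation between the free-factor hypothesis and surjectivity of core-graph maps, together with the reduction to the common free factor $L$ (which is what makes the edge counts comparable across different $n$), to be the main obstacle; rank alone does not suffice, since one can have $K_n \lneq K_{n+1}$ of equal rank with $K_n$ in no proper free factor of $K_{n+1}$.

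For the second assertion I would build $H_1$ by a recursion that terminates by the first assertion. Set $K_0 = H_0$. If $K_n$ is already a free factor of every free subgroup of $H$ containing it, stop and put $H_1 = K_n$. Otherwise choose a free $G \le H$ with $K_n \le G$ of which $K_n$ is not a free factor, and set $K_{n+1} = \operatorname{cl}_G(K_n)$, the smallest free factor of $G$ containing $K_n$; then $K_n \lneq K_{n+1}$, and by minimality of the closure $K_n$ lies in no proper free factor of $K_{n+1}$. The resulting nondecreasing chain satisfies the hypothesis of the first assertion, so it stabilizes; since every step was arranged to be strict, the recursion must in fact halt, and the terminal group is the desired $H_1$.
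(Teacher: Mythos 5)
Your proposal is correct, but note that the paper does not prove this lemma at all: it defers to Higman's original paper (\cite[Theorem 1]{H1}), whose 1951 argument necessarily proceeds by pre-Stallings means (Nielsen/Takahashi-style chain conditions). Your proof is therefore a genuinely different, self-contained route. The first move --- reducing to a single finitely generated free factor $L=\operatorname{cl}(K_0)$ of the countable free group $K=\bigcup_n K_n$ via the Kurosh fact that a free factor of $K$ intersects any subgroup of $K$ in a free factor of that subgroup --- is classical, but the heart of your argument is modern: the no-proper-free-factor hypothesis forces the induced maps of Stallings core graphs $\Gamma_n\rightarrow\Gamma_{n+1}$ over the rose of $L$ to be surjective (this is the Miasnikov--Ventura--Weil observation that algebraic extensions are quotients of the core graph), after which stabilization follows from counting edges and vertices. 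What this buys is transparency and brevity: the chain condition becomes a finiteness statement about quotients of one finite graph, and your derivation of the second assertion from the first (iterating the free factor closure inside witnesses of non-basicness, with termination guaranteed by the chain condition) is clean and formally identical in spirit to how Higman's basic subgroups are obtained.

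Two side assertions in your write-up are false, though harmlessly so, since in each case you only ever use the true direction. First, $\operatorname{rank}(\operatorname{cl}(A))\leq\operatorname{rank}(A)$ fails: for $A=\langle [a,b]\rangle\leq F(a,b)$ the commutator lies in no proper free factor (proper free factors of $F(a,b)$ are cyclic and generated by primitive elements, while $[a,b]$ is neither a proper power nor primitive), so $\operatorname{cl}(A)=F(a,b)$ has rank $2>1$. What you actually need --- that $\operatorname{cl}(A)$ is finitely generated --- is true because $\operatorname{cl}(A)$ lies inside the free factor generated by the finitely many basis elements occurring in a generating set of $A$. Second, the free-factor hypothesis does \emph{not} translate ``exactly'' into surjectivity of $\Gamma_n\rightarrow\Gamma_{n+1}$: surjectivity is strictly weaker. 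For example $\langle ab\rangle\leq F(a,b)$ has its two-edge core graph mapping onto the rose, yet $\langle ab\rangle$ is itself a proper free factor of $F(a,b)$. Only the implication you use is valid: if $K_n$ lies in no proper free factor of $K_{n+1}$, the map must be surjective, since otherwise its image is a proper connected subgraph containing the basepoint, and the subgroup carried by such a subgraph is a proper free factor of $K_{n+1}$ containing $K_n$. Since your counting argument needs nothing more, the proof stands; you should simply weaken those two claims to the forms actually used.
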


We call such a subgroup $H_1$ as is asserted in the second sentence of Lemma \ref{basic} a \emph{basic} subgroup \cite{H1}.

\begin{lemma}\label{nicelemma} The following hold:

 \begin{enumerate} \item  If $\phi: \HEG \rightarrow H$ is a homomorphism to an $\aleph_1$-free group then for every finitely generated $F \leq H$ there exists $n\in \omega$ for which $\phi(\HEG^n) \cap F = \{1_H\}$.

\item  If $\phi: G \rightarrow H$ is a homomorphism with $G$ either completely metrizable or locally compact Hausdorff and $H$ an $\aleph_1$-free group then for every finitely generated $F \leq H$ there exists an open neighborhood $U \subseteq G$ of $1_G$ such that $\phi(U) \cap F = \{1_H\}$.

\end{enumerate}

\end{lemma}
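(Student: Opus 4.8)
The plan is to prove both parts by a contradiction/compactness argument built on the basic-subgroup machinery of Lemma \ref{basic}. For part (1), suppose no such $n$ exists. Then for every $n$ there is a nontrivial element of $\phi(\HEG^n) \cap F$. My aim is to extract from this an infinite descending "word" lying in $\HEG$ whose image generates an infinitely ascending chain of finitely generated subgroups of $H$ that violates the stabilization conclusion of Lemma \ref{basic}. Concretely, I would inductively choose an increasing sequence $n_0 < n_1 < \cdots$ and words $W_k \in \HEG^{n_k}$ with $\phi([W_k]) \in F \setminus \{1_H\}$. The key feature of $\HEG$ that makes this work is that elements drawn from $\HEG^{n_k}$ for rapidly increasing $n_k$ have disjoint letter-supports in the $\{a_i^{\pm 1}\}_{i\le m}$ filtration, so one can concatenate infinitely many such words into a single legitimate word of $\W$ (the finiteness condition on preimages $W^{-1}(\{a_i^{\pm 1}\}_{i=0}^m)$ is preserved precisely because the supports escape to infinity).

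**Next I would** form an element $g = [W_0 W_1 W_2 \cdots] \in \HEG$ (the infinite concatenation, which is well-defined in $\HEG$ by the support-disjointness) and consider the tails $g_k = [W_k W_{k+1} \cdots]$. Using the retraction structure $\HEG \simeq \HEG_m * \HEG^m$ and the homomorphism property of $\phi$, I would relate $\phi(g)$ to the partial products $\phi([W_0])\cdots\phi([W_{k-1}])\,\phi(g_k)$. The idea is that the finitely generated subgroups $K_k = \langle \phi([W_0]),\ldots,\phi([W_{k-1}]), \phi(g_k)\rangle$ (or a suitable variant) form a nondecreasing chain in which each $K_k$ is not contained in a proper free factor of $K_{k+1}$ — this must be arranged so that the nontrivial elements of $F$ being produced at each stage prevent collapse. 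By Lemma \ref{basic} this chain stabilizes, which forces the contributions $\phi([W_k])$ to eventually be trivial, contradicting the choice $\phi([W_k]) \in F\setminus\{1_H\}$.

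For part (2), I would reduce to the topological setting using a standard Baire-category / compactness scheme. Suppose no neighborhood $U$ of $1_G$ has $\phi(U) \cap F = \{1_H\}$; then every neighborhood of $1_G$ contains an element mapping into $F \setminus \{1_H\}$. In the completely metrizable case I would choose a sequence $g_n \to 1_G$ with $\phi(g_n) \in F \setminus \{1_H\}$ converging so rapidly that any infinite product $\prod_{k\ge 0} g_{n_k}^{\pm 1}$ (along a subsequence, with freely chosen signs and an appropriate rearrangement) converges in $G$; this is the point where complete metrizability is essential, via a telescoping/Cauchy estimate on a compatible metric. In the locally compact Hausdorff case I would instead use compactness of a neighborhood to extract a convergent subnet and define the needed limiting homomorphic data. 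Either way this manufactures a homomorphism from $\HEG$ (or a closely related infinitely-divisible-product structure) into $H$, reducing part (2) to part (1) and again colliding with Lemma \ref{basic}.

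**The hard part will be** the combinatorial bookkeeping that guarantees the chain $K_k$ genuinely fails the "proper free factor" hypothesis of Lemma \ref{basic} at infinitely many stages — i.e.\ ensuring that the successive images do not absorb into free factors and thereby evade the contradiction. Controlling this likely requires passing to basic subgroups (the second sentence of Lemma \ref{basic}): after replacing each finitely generated subgroup by a basic one, freeness of the ambient finitely generated pieces lets me argue about free factors cleanly. The delicate step is coordinating the choice of the indices $n_k$ (to keep letter-supports disjoint for the concatenation) simultaneously with the requirement that the algebraic chain misbehave; I expect the infinite-product convergence argument in the topological case (part (2)) and the free-factor non-containment in the purely algebraic case (part (1)) to be the two principal technical obstacles, with the latter being where Lemma \ref{basic} does the essential work.
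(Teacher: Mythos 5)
Your overall shape (argue by contradiction, use basic subgroups from Lemma \ref{basic}, form infinite concatenations in $\HEG$ and their tails, extract telescoping relations) matches the paper's, but the engine of your argument --- the chain $K_k = \langle \phi([W_0]),\ldots,\phi([W_{k-1}]),\phi(g_k)\rangle$ failing the free-factor hypothesis of Lemma \ref{basic} --- cannot be made to work, and the idea that replaces it is missing. The difficulty is that here, unlike in the proof of Theorem \ref{thebigone}, all the elements $h_k = \phi([W_k])$ lie in the \emph{fixed} finitely generated subgroup $F$, so nothing forces the chain to grow or to misbehave. Concretely, plain concatenation of tails yields only the relations $y_k = h_k y_{k+1}$, and these are perfectly consistent in a free group with every $h_k$ nontrivial: take $h_k = h \neq 1_H$ for all $k$ and $y_k = h^{-k}y_0$, in which case your chain $K_k = \langle h, y_0 \rangle$ is constant and Lemma \ref{basic} is not contradicted. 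Since the argument may use nothing about the $y_k$ beyond these relations ($\phi$ is an arbitrary homomorphism), no contradiction can follow from them. The paper's key device, absent from your proposal, is to tune exponents to word length: fix a finitely generated basic subgroup $H_1 \supseteq F$ with length function $L$, set $k_n = L(\phi(W_n))$, and form the nested words $U_n = W_n(U_{n+1})^{k_n+2}$. Pushing the images into $H_1$ via a retraction $\rho: H_2 = \langle H_1 \cup \{\phi(U_n)\}_{n\in\omega}\rangle \rightarrow H_1$ (which exists because $H_2$ is countable, hence free, and $H_1$ is basic, hence a free factor of $H_2$), one gets $y_n = h_n y_{n+1}^{k_n+2}$ with $y_n, h_n \in H_1$; then $y_{n+1}\neq 1_H$ forces $L(y_n) \geq L(y_{n+1}) + 1$, so the $y_n$ are eventually trivial, whence $h_n = y_n y_{n+1}^{-k_n - 2} = 1_H$, the desired contradiction. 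This exponent trick is not bookkeeping; it is the crux, and your free-factor chain is not a viable substitute for it.

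Your part (2) has a second gap: convergence of rapidly chosen infinite products in $G$ does not manufacture a homomorphism from $\HEG$ into $G$ or $H$ --- one would need consistent images for \emph{all} infinite words, not just those along your chosen sequence --- so the proposed reduction of (2) to (1) is unjustified. The paper avoids this entirely by redoing the construction internally in $G$: in the completely metrizable case it chooses $g_{n+1}$ in neighborhoods small enough that the nested products $g_n(\cdots g_{m-1}(g_m)^{k_{m-1}+2}\cdots)^{k_n+2}$ are Cauchy in $m$, with limits $j_n$ satisfying $j_n = g_{n+1}j_{n+1}^{k_{n+1}+2}$; in the locally compact Hausdorff case it uses a nested sequence of nonempty compacta to produce $j_0$ and then the $j_n$. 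In both cases one then applies the identical retraction-plus-length argument to $z_n = \phi(j_n)$; no homomorphism out of $\HEG$ is ever constructed.
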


\begin{proof} (1)  Assume the hypotheses and let $F\leq H$ be a finitely generated free subgroup.  By Lemma \ref{basic} we can select a finitely generated basic subgroup $F \leq H_1 \leq H$.  Fix a free generating set for $H_1$ and let $L: H_1 \rightarrow \omega$ be the associated length function.

Suppose for contradiction that $\phi(\HEG^n)\cap F$ is nontrivial for all $n$.  For each $n\in \omega$ select $W_n \in \HEG^n \setminus \ker(\phi)$.  Let $h_n = \phi(W_n)$  and let $k_n = L(\phi(W_n))$.  Let $\{U_n\}_{n\in \omega}$ be the sequence of words such that $U_n = W_n(U_{n+1})^{k_n +2}$.  Intuitively we have $U_0 = W_0(W_1(\cdots)^{k_1+2})^{k_0+2}$.  Let $z_n = \phi(U_n)$ for all $n\in \omega$.  Let $H_2 = \langle H_1 \cup \{z_n\}_{n\in \omega}\rangle$.  Since $H_2$ is a countable subgroup of $H$ we know $H_2$ is free and therefore $H_1$ is a free factor.  Let $\rho: H_2 \rightarrow H_1$ be any retraction induced by selecting a complimentary free factor and projecting to $H_1$.   Letting $y_n = \rho(z_n)$ we obtain the relations

\begin{center}
$y_n = h_n(y_{n+1})^{k_n+2}$
\end{center}

\noindent If $y_n \neq 1_H$ then 

\begin{center}
$L(y_{n-1}) \geq L(y_n^{k_n+2}) - L(h_{n-1})$

$\geq L(y_n) + k_n +1 - L(h_{n-1})$

$= L(y_n) +1$
\end{center}

\noindent and so $y_{n-1}\neq 1_H$ and $L(y_{n-1}) \geq L(y_n) +1$ and arguing backwards we see that for $m\geq n$ if $y_m \neq 1_H$ then $y_n \neq 1_H$ and $L(y_n)\geq L(y_m) + (m-n)$.  This implies that the $y_n$ are eventually trivial.  But then for some $n$ we have $y_n = 1_H = y_{n+1}$, from which we have $\phi(W_n) = h_n = y_ny_{n+1}^{-k_n-2} = 1_H$, contrary to the choosing of $W_n \notin \ker(\phi)$.

(2)  Suppose first that $\phi: G \rightarrow H$ is a homomorphism from a completely metrizable group to an $\aleph_1$-free group and that $F \leq H$ is finitely generated.  Let $d$ be a complete metric for $G$ compatible with the topology.  Select a finitely generated basic subgroup $H_1 \geq F$ and let $L$ be the length function for a fixed free generating set on $H_1$.  If a neighborhood $U$ as in the conclusion does not exist then we select $g_0 \in \phi^{-1}(F \setminus \{1_H\})$.  Let $k_0 = L(\phi(g_0))$.  Select a neighborhood $U_1$ of $1_G$ sufficiently small that $g\in U_1$ implies

\begin{center}  $d(g_0g^{k_0+2}, g_0) \leq \frac{1}{2}$

$d(g, 1_G)\leq \frac{1}{2}$

\end{center}

\noindent Select $g_1\in U_1 \cap \phi^{-1}(F \setminus \{1_G\})$ and let $k_1 = L(\phi(g_1))$.  Supposing that we have selected group elements $g_0, \ldots, g_n$ and neighborhoods $U_1, \ldots, U_n$ and natural numbers $k_0, \ldots, k_n$ in this way we select a neighborhood $U_{n+1}$ of $1_G$ for which $g\in U_{n+1}$ implies

\begin{center}  $d(g_0(\cdots g_{n-1}(g_n(g)^{k_n+2})^{k_{n-1}+2} \cdots)^{k_0+2}, g_0(g_1(\cdots g_{n-1}(g_n)^{k_{n-1}+2} \cdots)^{k_1+2})^{k_0+2})\leq\frac{1}{2^{n+1}}$

$d(g_1(\cdots g_{n-1}(g_n(g)^{k_n +2})^{k_{n-1}+2} \cdots)^{k_1+2}, g_1(g_2(\cdots g_{n-1}(g_n)^{k_{n-1}+2} \cdots)^{k_2+2})^{k_1+2})\leq \frac{1}{2^{n+1}}$

$\vdots$

$d(g_{n-1}(g_n(g)^{k_n+2})^{k_{n-1}+2}, g_{n-1}(g_n)^{k_{n-1}+2})\leq\frac{1}{2^{n+1}}$

$d(g_n(g)^{k_n+2}, g_n)\leq\frac{1}{2^{n+1}}$

$d(g, 1_G)\leq \frac{1}{2^{n+1}}$
\end{center}

\noindent Select $g_{n+1}\in U_{n+1}\cap \phi^{-1}(F\setminus \{1_G\})$ and let $k_{n+1} = L(\phi(g_{n+1}))$.  For each $n\in \omega$ the sequence $g_n(\cdots g_{m-1}(g_m)^{k_{m-1} +2} \cdots)^{k_n+2}$ is Cauchy in $m$ and therefore converges to some $j_n = \lim_{m \rightarrow \infty}g_n(\cdots g_{m-1}(g_m)^{k_{m-1} +2} \cdots)^{k_n+2}$ and by continuity of multiplication we have $j_n = g_{n+1}j_{n+1}^{k_{n+1} + 2}$.  Let $z_n = \phi(j_n)$ for all $n\in \omega$.  By again letting $H_2 = \langle H_1 \cup \{z_n\}_{n\in \omega}\rangle$ and $\rho$ being any retraction from $H_2$ to $H_1$ we obtain a contradiction as in part (1).

Suppose now that $\phi: G \rightarrow H$ is a homomorphism with locally compact Hausdorff domain and $\aleph_1$-free image and that for some finitely generated $F \leq H$ there is no $U$ as in the conclusion.  Select $H_1 \leq H$ as in the other case and again let $L$ be the length function with respect to a fixed free generating set for $H_1$.  Let $U_0$ be an open neighborhood of $1_G$ for which $\overline{U_0}$ is compact.  Select $g_0 \in U_0 \cap \phi^{-1}(F \setminus \{1_G\})$.  Let $k_0 = L(\phi(g_0))$.  Supposing we have selected elements $g_0, \ldots, g_n$ and nesting neighborhoods $U_0, \ldots, U_n$ of $1_G$ and natural numbers $k_0, \ldots, k_n$ in this way, we select a neighborhood $U_{n+1}\subseteq U_n$ of $1_G$ such that $g\in U_{n+1}$ implies $g_ng^{k_n+2}\in U_n$.  Let $g_{n+1}\in U_{n+1} \cap \phi^{-1}(F\setminus \{1_G\})$ and let $k_{n+1} = L(\phi(g_{n+1}))$.

For each $n\in \omega$ we let $K_n = g_0(g_1(\cdots g_n(\overline{U_{n+1}})^{k_n +2} \cdots)^{k_1 +2})^{k_0+2}$.  The sequence $\{K_n\}_{n\in \omega}$ consists of nonempty nesting compacta and so the intersection is nonempty.  Let $j_0 \in \bigcap_{n\in \omega} K_n$ and for each $n\geq 1$ we select $j_n\in \overline{U_n}$ such that $$j_0 = g_0(g_1(\cdots g_{n-1}j_n^{k_{n-1}+2} \cdots)^{k_1 +2})^{k_0 +2}$$  Let $z_n = \phi(j_n)$ for each $n\in \omega$.  Since $H$ is locally free we notice that $z_n = \phi(g_n)z_{n+1}^{k_n +2}$ for all $n$.  We argue as before for a contradiction.
\end{proof}

\begin{proof}(of Theorem \ref{thebigone})  We prove n-slenderness first and the arguments of the other types of slenderness will follow the same format.  Suppose $\phi: \HEG \rightarrow H$ is a map with $\aleph_1$-free codomain and imagine for contradiction that $\phi(\HEG^n)$ is never trivial.  Select $W_0\in \HEG \setminus \ker(\phi)$.  We have $\langle \phi(W_0)\rangle$ contained in a finitely generated basic free subgroup $F_0 \leq H$.  By Lemma \ref{nicelemma} pick $m_1 \in \omega$ large enough that $\phi(\HEG^{m_1}) \cap F_0$ is trivial.  Select $W_1\in \HEG^{m_1} \setminus \ker(\phi)$.  The finitely generated subgroup $\langle F_0 \cup \phi(W_1)\rangle$ is contained in a finitely generated basic subgroup $F_1$.  Supposing we have selected group elements $W_0, \ldots, W_n$ and basic subgroups $F_0\leq  \ldots \leq  F_n$ and natural numbers $m_0< \ldots <m_n$ in this way we select $m_{n+1}>m_n$ for which $\phi(\HEG^{m_{n+1}}) \cap F_n = \{1_H\}$.  Pick $W_{n+1}\in \HEG^{m_{n+1}}\setminus \ker(\phi)$ and let $F_{n+1}$ be a finitely generated basic subgroup which includes $\langle F_n \cup \{\phi(W_{n+1})\}\rangle$.

Define words $U_0, U_1, \ldots$ by $U_n = W_n^2U_{n+1}^2$.  Let $h_n = \phi(W_n)$ and $y_n = \phi(U_n)$ for all $n\in \omega$.  We consider the subgroup $H_{\infty} = \langle \{h_n\}_{n\in \omega}\cup \{y_n\}_{n\in \omega}\rangle \leq H$.

Notice first that for each $n\in \omega$ the elements $h_0, \ldots, h_n$ freely generate a subgroup of $H$.  This claim is obvious for $n = 0$.  Supposing the claim is true for $n$ we have $\langle h_0, \ldots, h_n\rangle\leq F_n$ and since $h_{n+1} = \phi(W_{n+1})\notin F_n$ we see that $F_n$ is a proper free factor of the group $\langle F_n \cup \{h_{n+1}\}\rangle$.  Since finitely generated free groups are Hopfian we know that if we fix a free generating set $X_n$ for $F_n$, the elements $X_n \cup \{h_{n+1}\}$ freely generate a subgroup of $H$.

Next, we claim the elements $h_0, \ldots, h_n, y_{n+1}$ freely generate a subgroup of $H$.  We have already seen that $h_0, \ldots, h_n$ freely generate a subgroup of the group $F_n$.  If $y_{n+1}$ is nontrivial then since evidently $y_{n+1} \in \phi(\HEG^{m_{n+1}})$ we can argue as before that $h_0, \ldots, h_n, y_{n+1}$ freely generates a subgroup.  Were $y_{n+1} = h_{n+1}^2y_{n+2}^2$ trivial, we would have $h_{n+1} = y_{n+2}$ since $H$ is locally free.  Then $h_{n+1} = y_{n+2} \in F_{n+1}\cap \phi(\HEG^{m_{n+2}}) = \{1_H\}$, contrary to how $W_{n+1}$ was chosen.

Letting $H_n = \langle h_0, \ldots, h_n, y_{n+1} \rangle$ it is easy to see that each $H_n$ is properly contained in $H_{n+1}$ and is not a free factor (one can use \cite[Lemma 7]{H1}, for example).  This contradicts Lemma \ref{basic}.  This group $H_{\infty} = \bigcup_{n\in \omega}H_n$ was identified by Higman as being a subgroup of $\HEG$ (see the discussion following \cite[Theorem 6]{H2}).

Suppose now that $\phi: G \rightarrow H$ is a homomorphism from a completely metrizable group to an $\aleph_1$-free group.  Suppose $\ker(\phi)$ is not open.  Select $g_0 \in G \setminus \ker(\phi)$.  Pick a finitely generated basic subgroup $F_0$ for which $\phi(g_0) \in F_0$.  By Lemma \ref{nicelemma} select an $\epsilon_1>0$ such that for $g$ in the open ball $B(1_G, \epsilon_1)$ we have

\begin{center}  $d(g_0^2(g)^2, g_0^2) \leq \frac{1}{3}$
\end{center}

\noindent  and for $g\in B(1_G, \epsilon_1) \setminus \ker(\phi)$ that $\phi(g) \notin F_0$.  Select $g_1$ such that $g_1, g_1^2 \in B(1_G, \frac{\epsilon_1}{3})\setminus \ker(\phi)$.  Select a finitely generated basic free subgroup $F_2$ of $H$ for which $F_1 \geq \langle F_0 \cup \{\phi(g_1)\}\rangle$.  Select $\epsilon_2>0$ such that $g\in B(1_G, \epsilon_2)$ implies

\begin{center}  $d(g_0^2(g_1^2(g)^2)^2, g_0^2(g_1^2)^2) \leq \frac{1}{9}$

$d(g_1^2(g)^2  , g_1^2) \leq \frac{\epsilon_1}{9}$
\end{center}

\noindent and for $g\in B(1_G, \epsilon_2) \setminus \ker(\phi)$ that $\phi(g) \notin F_1$.  Select $g_2$ so that $g_2, g_2^2\in B(1_G, \frac{\epsilon_2}{3})\setminus \ker(\phi)$.  Let $F_2$ be a finitely generated basic subgroup of $H$ containing $\langle F_1\cup \{\phi(g_2)\}\rangle$.  Supposing we have selected $g_0, \ldots, g_n$ and $\epsilon_1, \ldots, \epsilon_n$ and $F_0, \ldots, F_n$ in this way, we select $\epsilon_{n+1}>0$ such that $g\in B(1_G, \epsilon_{n+1})$ implies

\begin{center}  $d(g_0^2(g_1^2(\cdots g_n^2(g)^2 \cdots)^2)^2, g_0^2(g_1^2(\cdots (g_n^2)^2 \cdots)^2)^2)\leq \frac{1}{3^{n+1}}$

$d(g_1^2(\cdots g_n^2(g)^2 \cdots)^2, g_1^2(\cdots g_n^2 \cdots)^2)\leq\frac{\epsilon_1}{3^{n+1}}$

$\vdots$

$d(g_n^2(g)^2 , g_n^2) \leq \frac{\epsilon_n}{3^{n+1}}$

\end{center}

\noindent and for $g\in B(1_G, \epsilon_{n+1})\setminus \ker(\phi)$ that $\phi(g) \notin F_n$.  Select $g_{n+1}$ so that $g_{n+1}, g_{n+1}^2\in B(1_G, \frac{\epsilon_{n+1}}{3})\setminus \ker(\phi)$.  Pick a finitely generated basic subgroup $F_{n+1}$ containing $\langle F_n \cup \{\phi(g_{n+1})\} \rangle$.  Notice that for each $n\in\omega$ the sequence $g_n^2(g_{n+1}^2(\cdots g_{m-1}^2g_m^2 \cdots )^2)^2$ is Cauchy and converges to an element $j_n$.  Moreover it is clear that for $n\geq 1$ we have $j_n\in B(1_G, \epsilon_n)$.  The relations $j_n = g_n^2j_{n+1}^2$ are clear by continuity of multiplication.

We let $h_n = \phi(g_n)$ and $y_n = \phi(j_n)$ for all $n\in \omega$.  Performing the same argument as before, we contradict Lemma \ref{basic}.

Finally, we suppose $\phi: G\rightarrow H$ has locally compact Hausdorff domain and $\aleph_1$-free codomain and for contradiciton suppose that $\ker(\phi)$ is not open.  We inductively define nesting sequences $\{U_n\}_{n\in \omega}$ and $\{V_n\}_{n\in \omega}$ of open neighborhoods of $1_G$ such that $U_0 \supseteq V_0 \supseteq U_1\supseteq V_1 \supseteq \cdots$ and $\overline{V_n} \subseteq U_n$,  as well as a sequence $\{g_n\}_{n\in \omega}$ of elements in $G$ and finitely generated basic subgroups $F_0 \subseteq \cdots$.  Let $U_0 = G$ and select a neighborhood $V_0$ of $1_G$ such that $\overline{V_0}$ is compact.  Select $g_0$ so that $g_0, g_0^2 \in V_0 \setminus \ker(\phi)$.  Select a finitely generated basic subgroup $F_0$ which includes $\langle \phi(g_0)\rangle$.  By Lemma \ref{nicelemma} select $U_1 \subseteq V_1$ such that $g \in U_1$ implies

\begin{center}  $g_0^2g^2 \in V_0$
\end{center}

\noindent and if $g\in U_1 \setminus \ker(\phi)$ we have $\phi(g) \notin F_0$.  Pick an open neighborhood $V_1$ of $1_G$ such that $\overline{V_1} \subseteq U_1$ and select $g_1$ such that  $g_1, g_1^2 \in V_1 \setminus \ker(\phi)$.  Let $F_1$ be a basic finitely generated group including $\langle F_0 \cup \{ \phi(g_1)\}\rangle$.

Suppose we have selected neighborhoods $U_0, \ldots, U_n$ and $V_0, \ldots, V_n$ as well as elements $g_0, \ldots, g_n$ and basic free groups $F_0, \ldots, F_n$ in this manner.  Select a neighborhood $U_{n+1}$ of $1_G$ such that $g\in U_{n+1}$ implies

\begin{center}  $g_0^2(g_1^2(\cdots g_n^2(g)^2 \cdots)^2)^2 \in V_0$

$g_1^2(g_2^2(\cdots g_n^2(g)^2 \cdots)^2)^2 \in V_1$

$\vdots$

$g_n^2(g)^2 \in V_n$
\end{center}

\noindent and if $g\in U_{n+1}\setminus \ker(\phi)$ we have $\phi(g) \notin F_n$.  Pick open neighborhood $V_{n+1}$ of $1_G$ such that $\overline{V_{n+1}} \subseteq U_{n+1}$ and select $g_{n+1}$ such that $g_{n+1}, g_{n+1}^2 \in V_{n+1} \setminus \ker(\phi)$.  Let $F_{n+1}$ be a basic finitely generated subgroup including $\langle F_n \cup \{\phi(g_{n+1})\} \rangle$.  Define compact sets $K_n$ for $n\in \omega$ by letting $K_n = g_0^2(g_1^2(\cdots  g_n^2(\overline{V_{n+1}})^2 \cdots)^2)^2$.  It is easy to see that $\overline{V_0} \supseteq K_0 \supseteq K_1 \supseteq \cdots$ and so we may select $j_0 \in \bigcap_{n\in \omega}K_n$.  For $n \geq 1$ select $j_n\in \overline{V_{n+1}}$ such that $j_0 = g_0^2(\cdots g_n^2(j_n)^2 \cdots)^2$.  Let $h_n = \phi(g_n)$ and $y_n = \phi(j_n)$.  Since $y_0 =  h_0^2(\cdots h_n^2(y_{n+1})^2 \cdots)^2$ for all $n \in\omega$ and $H$ is locally free we get relations $y_n = h_n^2y_{n+1}^2$.  We derive a contradiction by arguing in the same manner as for cm-slenderness.
\end{proof}
\end{section}

\begin{section}{Automatic continuity in the abelian case}\label{abeliancase}

To avoid confusion we continue using multiplicative group notation, unless otherwise stated, despite the fact that some groups under discussion will be abelian.  We give definitions (see \cite{Fu}):

\begin{definition}\label{algebraicallycompactdef}  An abelian group $H$ is \emph{algebraically compact} if $H$ is a direct summand of a Hausdorff compact abelian group.
\end{definition}

The algebraically compact groups are closed under inverse limits, and finite abelian groups are obviously algebraically compact.  For each prime $p$ we have an inverse system of abelian groups $\mathbb{Z}/p^{n+1}\mathbb{Z} \rightarrow \mathbb{Z}/p^n\mathbb{Z}$ and let $J_p$ denote the inverse limit (the \emph{$p$-adic completion of $\mathbb{Z}$}.)  We also have an inverse system of abelian groups $\mathbb{Z}/n_0\mathbb{Z} \rightarrow \mathbb{Z}/n_1\mathbb{Z}$ (here $n_1 \mid n_0$) and denote by $\hat{\mathbb{Z}}$ the inverse limit (the $\mathbb{Z}$-adic completion of $\mathbb{Z}$.)  Both $J_p$ and $\hat{\mathbb{Z}}$ are algebraically compact and $J_p$ carries a natural group topology under which it is homeomorphic to the Cantor set.

An element $a$ of $\hat{\mathbb{Z}}$ has a representation of form $a = (a_1 + 2!\mathbb{Z}, a_2 + 3!\mathbb{Z}, \ldots)$ which is formally represented by the sum $\sum_{n=1}^{\infty}n!a_n$.  Two formal sums $\sum_{n=1}^{\infty}n!a_n$ and $\sum_{n=1}^{\infty}n!b_n$ represent the same element in $\hat{\mathbb{Z}}$ provided for all $m \geq 1$ we have

\begin{center}  $(m+1)! \mid \sum_{n=1}^m n!a_n - \sum_{n=1}^mn!b_n$
\end{center}

\begin{definition}\label{cotorsiondef}  An abelian group $H$ is \emph{cotorsion} if it is the homomorphic image of an algebraically compact group.
\end{definition}

\begin{definition}\label{cotorsionfreedef}  An abelian group $H$ is \emph{cotorsion-free} if it does not contain a nontrivial cotorsion group.  Equivalently $H$ is cotorsion-free if $H$ does not contain torsion, $\mathbb{Q}$, or a copy of the $p$-adic integers $J_p$ for any prime $p$ \cite[Theorem 13.3.8]{Fu}.
\end{definition}

\begin{definition}\label{reduced}  A torsion-free abelian group is \emph{reduced} if it contains no copy of $\mathbb{Q}$.
\end{definition}

\begin{definition}\label{Ulmsubgroup}  The \emph{first Ulm subgroup} of an abelian group $H$ is the subgroup $U(H) = \bigcap_{n \geq 1} H^n = \{h\in H\mid (\forall n\geq 1)(\exists h_n) h =h^n\}$. 
\end{definition}

\begin{definition}\label{lineartopology}  A topology on an abelian group $H$ is \emph{linear} if there exists a filter $\mathcal{F}$ of subgroups of $H$ such whose elements form a basis for the open neighborhoods of $1_H$.
\end{definition}

\begin{definition}\label{slenderdef}  An abelian group $H$ is \emph{slender} if for every homomorphism $\phi: \prod_{\omega}\mathbb{Z} \rightarrow H$ there exists some $m\in \omega$ such that $\phi = \phi \circ p_m$ where $p_m: \prod_{\omega}\mathbb{Z} \rightarrow \bigoplus_{n=0}^m\mathbb{Z} \times (0)_{n=m+1}^{\infty}$ is the retraction which projects the first $m+1$ coordinates.  Equivalently $H$ is slender if $H$ does not contain torsion, $\mathbb{Q}$, $\prod_{\omega}\mathbb{Z}$ or a copy of the $p$-adic integers $J_p$ for any prime $p$ \cite[Theorem 13.3.5]{Fu}.  Equivalently $H$ is slender if $H$ is torsion-free, reduced and contains no subgroup that admits a complete non-discrete metrizable linear topology \cite[Theorem 13.3.1]{Fu}.
\end{definition}

We prove a lemma which follows along the lines of \cite[Theorem 3.1]{EdFi}:

\begin{lemma}\label{getoff}  If $\phi: G \rightarrow H$ has completely metrizable or locally compact Hausdorff domain and cotorsion-free abelian codomain then $\ker(\phi)$ is closed.
\end{lemma}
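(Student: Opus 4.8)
The plan is to prove the contrapositive: if $\ker(\phi)$ is not closed then $H$ contains a nontrivial \emph{cotorsion} subgroup, contradicting cotorsion-freeness (Definition \ref{cotorsionfreedef}). Concretely I would produce a nonzero homomorphism into $H$ from an algebraically compact group, whose image is then cotorsion as a homomorphic image of an algebraically compact group (Definitions \ref{algebraicallycompactdef} and \ref{cotorsiondef}); the natural source is the factorial completion $\hat{\mathbb Z}$, so that in the end one exhibits a copy of $\mathbb Q$ or of some $J_p$ inside $H$.

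First I would recast the failure of closedness in a form parallel to the hypotheses driving Lemma \ref{nicelemma}. If $\ker(\phi)$ is not closed, choose $g_0\in\overline{\ker(\phi)}\setminus\ker(\phi)$ and put $h_0=\phi(g_0^{-1})\neq 1_H$. Every element of the coset $g_0^{-1}\ker(\phi)$ has $\phi$-image $h_0$, and since $g_0\in\overline{\ker(\phi)}$ this coset accumulates at $1_G$. Thus, just as the density statement ``$\phi^{-1}(F\setminus\{1\})$ meets every neighbourhood of $1_G$'' drove Lemma \ref{nicelemma}, here I have a fixed \emph{nonzero} value $h_0$ realised by elements arbitrarily close to $1_G$: in the completely metrizable case a sequence $t_n\to 1_G$ with $\phi(t_n)=h_0$, and in the locally compact Hausdorff case elements of this coset lying in arbitrarily small relatively compact neighbourhoods of $1_G$.

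Next I would run the nested-power construction of Lemma \ref{nicelemma} essentially verbatim, with this single coset playing the role of $\phi^{-1}(F\setminus\{1\})$. Fixing exponents $e_n\geq 2$, I build limits $j_n\in G$ satisfying $j_n=t_n\,j_{n+1}^{\,e_n}$ exactly in $G$ (Cauchy products controlled by a complete metric in the metrizable case; a nested intersection of nonempty compacta in the locally compact case). Applying $\phi$ and writing $z_n=\phi(j_n)$ yields $z_n=h_0\,z_{n+1}^{\,e_n}$ for every $n$, with the \emph{same} nonzero $h_0$ at each level. Setting $P_n=e_0e_1\cdots e_{n-1}$ and $S_n=\sum_{k<n}P_k$ one unwinds these to $z_0=h_0^{\,S_n}\,z_n^{\,P_n}$, so $z_0h_0^{-S_n}$ is a $P_n$-th power in $H$. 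Choosing the $e_n$ so that every prime power divides some $P_n$ (for instance $e_n=n+2$, giving $P_n=(n+1)!$) makes the exponents $P_n$ factorial-adically cofinal and renders the integers $S_n$ convergent in $\hat{\mathbb Z}$; this is exactly the data one wants to assemble into a homomorphism from $\hat{\mathbb Z}$ sending a generator to $h_0$ and realising the infinitely-divisible element $z_0$.

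The step I expect to be the main obstacle, and the one where I would follow \cite[Theorem 3.1]{EdFi} closely, is guaranteeing that the resulting homomorphism is \emph{nonzero} — equivalently, that the element witnessing infinite divisibility is not $1_H$, so that one genuinely obtains a copy of $\mathbb Q$ or of $J_p$ rather than merely a free subgroup. The relations $z_n=h_0\,z_{n+1}^{\,e_n}$ on their own present a free abelian group and so force nothing; what rescues the argument is that the same nonzero value $h_0$ appears as the constant factor at every level, so it survives modulo the divisible part and pins down a nontrivial image. Once nontriviality is secured, this image is a nontrivial cotorsion subgroup of the cotorsion-free group $H$, the desired contradiction; the two topological hypotheses enter only through how the limits $j_n$ are produced, precisely as in the two halves of Lemma \ref{nicelemma}.
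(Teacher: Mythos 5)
Your preliminary reductions are sound and match the paper's: the coset $g_0^{-1}\ker(\phi)$ does accumulate at $1_G$ with constant image $h_0\neq 1_H$, and the nested-limit construction (Cauchy products in the completely metrizable case, nested compacta in the locally compact case) does produce elements $j_n$ with $j_n=t_nj_{n+1}^{e_n}$, hence $z_n=h_0z_{n+1}^{e_n}$ in $H$. The gap is in the final step, and it is not the one you flag. A single chain of relations $z_n=h_0z_{n+1}^{e_n}$ --- equivalently, one element $z_0$ such that $z_0h_0^{-S_n}$ is a $P_n$-th power for all $n$ --- neither defines a homomorphism $\hat{\mathbb{Z}}\rightarrow H$ nor contradicts cotorsion-freeness. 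These relations are satisfiable, with $h_0\neq 1_H$, inside a cotorsion-free group: working additively in $\hat{\mathbb{Z}}$ with $e_n=n+2$, $P_n=(n+1)!$, put $z_n=\sum_{k\geq n}P_k/P_n$ (each term is an integer, and the series converges $\mathbb{Z}$-adically because $(k+1)!/(n+1)!$ is divisible by any fixed $N$ once $k\geq n+N$). Then $z_n=1+e_nz_{n+1}$, so the countable subgroup $A=\langle 1,z_0,z_1,\ldots\rangle\leq\hat{\mathbb{Z}}$ satisfies exactly your relations with $h_0=1\neq 0$. Yet $A$ is cotorsion-free: it is torsion-free, it is too small to contain any $J_p$, and it contains no copy of $\mathbb{Q}$ since any divisible subgroup of $\hat{\mathbb{Z}}$ lies in $\bigcap_{n\geq 1}n\hat{\mathbb{Z}}=0$. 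So the fact that ``the same nonzero $h_0$ appears at every level'' pins down nothing; your own remark that the relations present a group and force nothing remains true with the constant $h_0$ included, and no appeal to ``surviving modulo the divisible part'' can rescue this (the group $A$ is reduced).

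What is missing --- and what the paper actually proves --- is totality of the would-be homomorphism. A homomorphism $\psi:\hat{\mathbb{Z}}\rightarrow H$ with $\psi(1)=h_0$ requires, for \emph{every} element of $\hat{\mathbb{Z}}$, i.e.\ for every integer sequence $\{a_n\}$, an element of $H$ congruent to $\sum_{n=1}^m n!a_nh_0$ modulo $(m+1)!H$ for all $m$; once all such elements exist, well-definedness and additivity follow because cotorsion-free implies torsion-free and reduced, hence $U(H)=0$, and nontriviality --- the step you single out as the main obstacle --- is automatic, since $\psi(1)=h_0$. The paper secures totality by observing that $H_{\infty}=\bigcap_U\phi(U)$ (intersection over neighborhoods $U$ of $1_G$) is a \emph{subgroup} containing $h_0$, so every power $h_0^{a_n}$ also has preimages in every neighborhood of $1_G$; it then runs the nested-limit construction once for each sequence $\{a_n\}$, choosing $g_n\in U_n\cap\phi^{-1}(h_0^{a_n})$. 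Your single-coset setup can be repaired along the same lines (from $t\in V$ with $\phi(t)=h_0$ one gets $t^{a_n}\in V^{a_n}$ with image $h_0^{a_n}$, and $V^{a_n}$ can be forced into any prescribed neighborhood by shrinking $V$), but as written, realizing only the constant sequence $a_n\equiv 1$ proves nothing.
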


\begin{proof}  Suppose $\phi: G \rightarrow H$ is a homomorphism with completely metrizable domain and cotorsion-free codomain and let $d$ be a complete metric compatible with the topology on $G$.  Suppose for contradiction that $\ker(\phi)$ is not closed.  If $g\in \overline{\ker(\phi)}\setminus \ker(\phi)$ then for there every neighborhood $U$ of $1_G$ we have $\phi(g)\in \phi(U) \setminus \{1_H\}$.  Then letting $h = \phi(g)$ and $H_{\infty} = \bigcap_{n\in \omega} \phi(B(1_G, \frac{1}{n}))$ we get $h\in H_{\infty} \setminus\{1_H\}$ and $H_{\infty}$ is easily seen to be a subgroup.  We obtain a contradiction by finding a nontrivial homomorphic image of the algebraically compact $\hat{\mathbb{Z}}$ in $H$, and since a homomorphic image of an algebraically compact group is cotorsion we will be finished.

Since $H$ is torsion-free and reduced we have that the first Ulm subgroup $U(H)$ is trivial.  We show that for each sequence of integers $\{a_n\}_{n\in \omega \setminus \{0\}}$ there exists a $j\in G$ for which (under additive notation) $(m+1)! \mid \phi(j) - \sum_{n=1}^m n!a_i h$ for all $m\geq 1$.  Then because $U(H)$ is trivial we get a well-defined $\psi: \hat{\mathbb{Z}} \rightarrow H$ given by $\psi(\sum_{n = 1}^{\infty}n!a_i) = \phi(j)$.  Since $h\in \psi(\hat{\mathbb{Z}})\setminus \{1_H\}$ we will have our nontrivial homomorphism.

Let a sequence $\{a_n\}_{n\in \omega}$ be given.  Select $g_1\in G$ such that $\phi(g_1) = h^{a_1}$.  Pick a neighborhood $U_2$ of $1_G$ such that $g'\in U_2$ implies $d(g_1(g')^{2!}, g_1)\leq \frac{1}{2}$.  Select $g_2 \in U_2 \cap \phi^{-1}(h^{a_2})$ (this is possible since $\phi$ surjects $U_2$ onto $H_{\infty}$ and $h \in H_{\infty}$).  Supposing we have selected $g_1, \ldots, g_n$ and $U_2, \ldots, U_n$ we select a neighborhood $U_{n+1}$ of $1_G$ such that $g'\in U_{n+1}$ implies 

\begin{center}  $d(g_1(g_2(\cdots g_n(g')^{(n+1)!}\cdots)^{3!})^{2!}, g_1(g_2(\cdots  g_n \cdots)^{3!})^{2!})\leq \frac{1}{2^n}$

$\vdots$

$d(g_n(g')^{(n+1)!}, g_n)\leq \frac{1}{2^n}$
\end{center}

\noindent Select $g_{n+1}\in U_{n+1}\cap \phi^{-1}(h^{a_{n+1}})$.  Fixing a $q \geq 1$ it is clear that the sequence $g_q(g_{q+1}(\cdots g_n\cdots)^{(q+2)!})^{(q+1)!}$ is Cauchy and therefore converges to, say, $j_q$.  We have $j = j_1 = g_1(\cdots g_{n-1}(j_n)^{n!} \cdots)^{2!}$ for each $n\geq 1$ by continuity of multiplication.  The relationship $(m+1)! \mid\phi(j) - \sum_{n=1}^mn!a_nh$ is now clear for all $m$.

Suppose now that $G$ is locally compact Hausdorff and for contradiction that $h\in H_{\infty} = \bigcap_{U \in \mathcal{U}}\phi(U)$ is nontrivial where $\mathcal{U}$ denotes the collection of open neighborhoods of $1_G$.  We again show that each sequence $\{a_n\}_{n\geq 1}$ has an element.  Let $V = V_1$ be a neighborhood of $1_G$ for which $\overline{V}$ is compact.  Pick $g_1\in V \cap \phi^{-1}(h^{a_1})$.  Supposing we have selected sequences $g_1, \ldots g_n$ and open neighborhoods $V_1, \ldots, V_n$ of $1_G$ in this way we select a neighborhood $V_{n+1}$ of $1_G$ such that $g_nV_{n+1}^{(n+1)!}\subseteq V_n$.  Let $g_{n+1}\in V_{n+1}\cap \phi^{-1}(h^{a_{n+1}})$.  Let $K_n = g_1(\cdots  g_n(\overline{V_{n+1}})^{(n+1)!}\cdots)^{2!}$ we then select $j \in \bigcap_{n \geq 1}K_n$ and notice once again that $(m+1)! \mid\phi(j) - \sum_{n=1}^mn!a_nh$ for all $m\geq 1$.
\end{proof}

\begin{proof}(of Theorem \ref{characterizationoflcHslenderabelian})

(1)  Suppose an abelian group $H$ is cm-slender.  Then $H$ cannot contain torsion, for then $H$ would contain some cyclic group of prime order $\mathbb{Z}/p\mathbb{Z}$.  The group $\prod_{\omega}\mathbb{Z}/p\mathbb{Z}$ is compact metrizable in a natural way and any homomorphism from $\bigoplus_{\omega}\mathbb{Z}/p\mathbb{Z} \leq \prod_{\omega}\mathbb{Z}/p\mathbb{Z}$ to $\mathbb{Z}/p\mathbb{Z}$ extends to a homomorphism on the entirety of $\prod_{\omega}\mathbb{Z}/p\mathbb{Z}$ by a vector space argument, so that it is quite easy to construct a homomorphism from $\prod_{\omega}\mathbb{Z}/p\mathbb{Z}$ to $\mathbb{Z}/p\mathbb{Z} \leq H$ which does not have an open kernel.

Also, $H$ cannot have a copy of $\mathbb{Q}$ since otherwise there exists a homomorphism from $\mathbb{R}$ to $\mathbb{Q} \leq H$ which does not have open kernel.  Neither can $H$ have a copy of a group which admits a non-discrete Polish topology, since then the inclusion map would witness that $H$ is not cm-slender.

Supposing $H$ is a group which is torsion-free, reduced and contains no subgroup which admits a non-discrete Polish topology.  Since $J_p$ has a non-discrete metrizable compact topology, we know that $H$ cannot contain any $J_p$ and so $H$ is cotorsion-free.  Let $\phi: G \rightarrow H$ be a homomorphism with $G$ completely metrizable.  Since $H$ is cotorsion-free, we have by Lemma \ref{getoff} that $\ker(\phi)$ is closed.  Supposing for contradiction that $\ker(\phi)$ is not open, we get a sequence $\{g_n\}_{n\in \omega}$ of elements of $G$ which converges to $1_G$ and such that $g_n \notin \ker(\phi)$.  Letting $G_{\infty} \leq G$ be the smallest closed subgroup of $G$ containing the elements of $\{g_n\}_{n\in \omega}$, we have that $G_{\infty}$ is Polish.  Also, $\ker(\phi\upharpoonright G_{\infty}) = G_{\infty}\cap \ker(\phi)$ is closed in $G_{\infty}$, and by how we selected $\{g_n\}_{n\in \omega}$ we know $\ker(\phi\upharpoonright G_{\infty})$ is not open in $G_{\infty}$.  The group $G_{\infty}/\ker(\phi\upharpoonright G_{\infty})$ is again a Polish group \cite[2.3.iii]{Ke} and not discrete by considering the cosets $g_n\ker(\phi\upharpoonright G_{\infty})$.  The map $\phi$ descends to an injective homomorphism $\overline{\phi}: G_{\infty}/\ker(\phi)\rightarrow H$.  Then $H$ contains a subgroup which admits a non-discrete Polish topology and we have a contradiction.

(2)  Suppose an abelian group $H$ is lcH-slender.  Then $H$ cannot contain torsion, $\mathbb{Q}$ or any $J_p$ by the reasoning as in (1) and so $H$ is cotorsion-free.

Suppose on the other hand that $H$ is cotorsion-free.  Let $\phi: G \rightarrow H$ be a homomorphism with locally compact Hausdorff domain.  By Lemma \ref{getoff} we know $\ker(\phi)$ is closed.  Then $G/\ker(\phi)$ is a locally compact abelian Hausdorff group and $\phi$ passes to an injective homomorphism $\overline{\phi}: G/\ker(\phi) \rightarrow H$.  By \cite[Theorem 25]{Mo} there exists an open subgroup $U$ of $G/\ker(\phi)$ which is topologically isomorphic to $\mathbb{R}^n \times K$ where $K$ is a compact group.  We show $U$ is trivial, so that $G/\ker(\phi)$ is discrete and $\ker(\phi)$ is open.  First of all, the superscript $n$ must be $0$ since $\overline{\phi}$ is injective and $H$ cannot contain $\mathbb{Q}$ as a subgroup.  But it is clear that $K$ must be trivial as well since otherwise $\phi(K)$ would be nontrivial cotorsion.
\end{proof}

The proof of Theorem \ref{lcHslender} now follows easily.  If a group $H$ is $\aleph_1$-free abelian, then it cannot contain torsion or $\mathbb{Q}$.  Also $H$ cannot contain any $J_p$ since then it would also contain the additive group of $\mathbb{Z}[\frac{1}{q}]$ for every prime $q\neq p$, and hence contain a countable subgroup which is not free abelian.  Thus an $\aleph_1$-free abelian group is cotorsion-free and we apply Theorem \ref{characterizationoflcHslenderabelian}.

We provide some definitions towards Theorem \ref{strongly} (see \cite{Me}):

\begin{definition}\label{purity}  If $H$ is $\kappa$-free abelian we say a subgroup $M\leq H$ is \emph{$\kappa$-pure} if $M$ is a direct summand of $\langle M \cup X\rangle$ for each set $X \subseteq H$ of cardinality $<\kappa$.
\end{definition}

\begin{definition}\label{stronglydef}  A $\kappa$-free abelian group $H$ is \emph{strongly $\kappa$-free abelian} if every subset $X \subseteq H$ of cardinality $<\kappa$ is contained in a $\kappa$-pure subgroup of $H$ generated by fewer than $\kappa$ elements.
\end{definition}

\begin{proof}(of Theorem \ref{strongly})  Suppose $\phi: G \rightarrow H$ has completely metrizable domain and strongly $\aleph_1$-free codomain.  Let $d$ be a complete metric compatible with the topology of $G$.  Supposing that $\ker(\phi)$ is not open we select $g_n\in B(1_G, \frac{1}{n}) \setminus \ker(\phi)$.  Select a countable $\aleph_1$-pure subgroup $M \supseteq \{\phi(g_n)\}_{n\in \omega}$.  Fix a free abelian generating set for $M$ and let $L: M \rightarrow \omega$ be the length function.  Let $k_n = L(\phi(g_n))$ for each $n\in \omega$.  We define a subsequence $\{n_q\}_{q\in \omega}$ inductively.  Let $n_0 = 0$ and supposing we have defined $n_0, \ldots, n_q$ we let $n_{q+1}$ be such that

\begin{center}  $d(g_{n_0}(g_{n_1}(\ldots g_{n_q}(g_{n_{q+1}})^{k_{n_q}+1} \ldots)^{k_{n_1}+2})^{k_{n_0} +2}, g_{n_0}(g_{n_1}(\ldots g_{n_q} \ldots)^{k_{n_1}+2})^{k_{n_0} +2}) \leq \frac{1}{2^q}$

$d(g_{n_1}(g_{n_2}(\cdots g_{n_q}(g_{n_{q+1}})^{k_{n_q}+2} \cdots)^{k_{n_2}+2})^{k_{n_1}+2}, g_{n_1}(g_{n_2}(\cdots  g_{n_q}\cdots)^{k_{n_2}+2})^{k_{n_1}+2}) \leq \frac{1}{2^q}$

$\vdots$

$d(g_{n_q}(g_{n_{q+1}})^{k_{n_q}+2}, g_{n_q})\leq \frac{1}{2^q}$
\end{center}

\noindent For each $m\in \omega$ the sequence $g_{n_m}(g_{n_{m+1}}(\cdots g_{n_q} \cdots)^{k_{n_{m+1}}+2})^{k_{n_m}+2}$  is Cauchy and therefore converges to an element $j_m$.  Letting $\rho: \langle M \cup \{\phi(j_m)\}_{m\in \omega} \rangle \rightarrow M$ be a retraction, we derive a contradiction as before.

Since for abelian groups cm-slenderness implies n-slenderness and lcH-slenderness, we are done.  
\end{proof}

There is an alternative proof for the fact that strongly $\aleph_1$-free groups are n- and lcH-slender which uses infinitary logic.  If $H$ is strongly $\aleph_1$-free then $H$ has the same $L_{\infty \omega_1}$ theory as free abelian groups \cite{Ek}.  Free abelian groups are slender, and slenderness is $L_{\infty \omega_1}$ axiomatizable \cite{SKo}, so $H$ is slender.  Slender groups are n-slender \cite{Ed} and they are also lcH-slender by Theorem \ref{characterizationoflcHslenderabelian}, so we are done.
\end{section}

\end{document}